\providecommand{\keywords}[1]{\textbf{\textit{Key words and phrases }} #1}
\theoremstyle{definition}
\newtheorem{theo}{Theorem}[subsection]
\newtheorem{pr}[theo]{Proposition}
 \newtheorem{lem}[theo]{Lemma}
\theoremstyle{remark}
\newtheorem{rema}[theo]{Remark}
\theoremstyle{definition}
\newtheorem{defi}[theo]{Definition}
\numberwithin{equation}{subsection}
\newcommand\cu{\underline{C}}
\newcommand\au{\underline{A}}
\newcommand\obj{\operatorname{Obj}}
\newcommand\mo{\operatorname{Mor}}
\newcommand\id{\operatorname{id}}
\DeclareMathOperator\imm{\operatorname{Im}}
\newcommand\hw{{\underline{Hw}}}
\newcommand\lo{\mathcal{LO}}
\newcommand\ro{\mathcal{RO}}
\newcommand\jo{\mathcal{J}}
\newcommand\z{{\mathbb{Z}}}
\newcommand\ns{\{0\}}
\newcommand\ab{\operatorname{Ab}}
\newcommand\cp{\mathcal{P}}
\newcommand\perpp{{}^{\perp}}
\newcommand\opp{^{op}}
\newcommand\wsp{w^{sph}}
\newcommand\shtop{SH}
\newcommand\hsing{H^{sing}}
\newcommand\hsingc{H_{sing}}
\begin{document}

\title{Killing weights from  the perspective of  $t$-structures} 
 \author{Mikhail V.Bondarko\thanks{The  work is supported by Russian Science Foundation grant no. 20-41-04401.\break  
St. Petersburg State University,  7/9 Universitetskaya nab., St. Petersburg, 199034 Russia; St. Petersburg Department of Steklov Math. Institute, Fontanka 27, St. Petersburg, 191023, Russia; m.bondarko@spbu.ru.}, Sergei V. Vostokov\thanks{St. Petersburg State University,  7/9 Universitetskaya nab., St. Petersburg, 199034 Russia,  s.vostokov@spbu.ru}\\ To  Aleksei Nikolaevich Parshin, on the occasion of his 80th Birthday.}
 \maketitle
\begin{abstract}
This paper is devoted to morphisms killing weights in a range (as defined by the first author) and to objects without these weights (as essentially defined by J. Wildeshaus) in a triangulated category endowed with a weight structure $w$. We describe several new criteria for morphisms and objects to satisfy these conditions. In some of them we use virtual $t$-truncations and a $t$-structure adjacent to $w$. 
 In the case where the latter exists we prove that a morphism kills weights $m,\dots,n$ if and only if it factors through an object without these weights; we also 
  construct new families of torsion theories and projective and injective classes. 
  As a consequence, we obtain some  "weakly functorial decompositions" of spectra (in the stable homotopy category $SH$) and a new description of those morphisms that act trivially on  
  singular cohomology $\hsingc^0(-,\Gamma)$ with coefficients in every abelian group $\Gamma$. 
\end{abstract}

\keywords{Triangulated category, torsion theory, weight structure,  killing weights, objects without weights, t-structure, projective class, injective class,    
stable homotopy category, singular (co)homology.} 


 \section*{Introduction}
 
 This note is a certain complement to \cite{bkwn} whose central notion was that of  {\it morphism killing weights} $m,\dots,n$ in a triangulated category $\cu$ endowed with a {\it weight structure} $w$; here $m\le n\in \z$. One says that an  object $M$ of $\cu$ is {\it without these weights} if $\id_{M}$ kills  weights $m,\dots,n$, and it was proved in ibid. that this version of this notion is closely related to the original definition of J. Wildeshaus. 
  One may consider these notions in the important case $\cu=SH$ and $w=\wsp$ (this is the {\it spherical weight structure} on the stable homotopy category); in this case $M$ is without weights $m,\dots,n$ whenever its singular homology $\hsing_{*}(M,\z)$  vanishes in these degrees and $\hsing_{m-1}(M,\z)$ is a free abelian group. 

In the current paper we establish some new criteria for killing weights $m,\dots,n$ for a $\cu$-morphism (in  Theorem \ref{tkw}) and for the absence of these weights in $M\in \obj \cu$ (see Theorem \ref{tww}). In  particular, if 
 $w=\wsp$ 
  then $X$ is without weights $m,\dots,n$ if and only if it possesses a cellular tower with  $X^{(m-1)}=X^{(n)}$ (these are the corresponding skeleta of $X$; see  \cite[\S6.3]{marg}). In contrast to \cite{bkwn}, we mention weight filtrations and virtual $t$-truncations of cohomology in some of our criteria.
 
 Moreover, we study the case where $\cu$ is also endowed with a $t$-structure $t$ {\it adjacent} to $w$; note that $t$ of this sort exists for $w=\wsp$, and 
  if $t$ exists then virtual $t$-truncations of representable functors are represented by the corresponding $t$-truncations. 
 In this case a morphism kills weights $m,\dots,n$ if and only if it factors through an object without these weights; see 
  Theorem \ref{tptp}. Moreover, this theorem
     gives 
   certain torsion theories and projective and injective classes in $\cu$ (these notions were defined in \cite{iyayo} and \cite{christ}, respectively); 
    this yields some more criteria for killing weights and the   absence of weights.  
    
    In Theorem \ref{thegcomp} we discuss the application of these results to the case where $w$ is {\it purely compactly  generated}; this includes the case $w=\wsp$. In particular,  for an $SH$-morphism $g$ we have $\hsingc^0(g,\Gamma)=0$ for every abelian group $\Gamma$ (that is, 
     $g$ annihilates singular cohomology with coefficients in $\Gamma$)  if and only if $g$ factors through a spectrum $X$ such that all $\hsingc^0(X,\Gamma)$ vanish; see Remark \ref{rsh}(1). Furthermore, this statement naturally extends to the equivariant stable  homotopy category  $SH(G)$; see Remark \ref{rsh}(2).


The authors are deeply grateful to the referee for useful comments to the text.  The first author 
 also thanks  
  the Max Planck Institut für Mathematik for the hospitality  during 
 finishing this version of the paper.

\section{Preliminaries}\label{sold}
In this section we recall several relevant definitions and statements. We do not prove anything new.

\subsection{Some categorical notation }\label{snotata} 

\begin{itemize}

\item Let $C$ be a category and  $X,Y\in\obj C$. Then  we will write
$C(X,Y)$ for  the set of morphisms from $X$ to $Y$ in $C$.


\item 
We say that 
 $X$ is a {\it retract} of $Y$ if $\id_X$ can be 
 factored through $Y$. 

\item A 
 subcategory $\cp$ of an additive category $C$ 
is said to be {\it retraction-closed} in $C$ if it contains all retracts of its objects in $C$.


\item The symbol $\cu$ below will always denote some triangulated category; usually it is  endowed with a weight structure $w$.
Moreover, $w$ will always denote a weight structure on $\cu$, and $t$ is a $t$-structure on $\cu$ (see Definition \ref{dhopo}(\ref{itw},\ref{itt}) below). 

$\au$ will   denote some abelian category.


\item For any  $A,B,C \in \obj\cu$ we  say that $C$ is an {\it extension} of $B$ by $A$ if there exists a distinguished triangle $A \to C \to B \to A[1]$.


\item For $X,Y\in \obj \cu$ we 
 write $X\perp Y$ if $\cu(X,Y)=\ns$. If $D$ and $E$ are classes of objects or subcategories of $\cu$ then 
we will write $D\perp E$ if $X\perp Y$ for all $X\in D,\
Y\in E$. Moreover,  we  write $D^\perp$ for the class
$$\{Y\in \obj \cu:\ X\perp Y\ \forall X\in D\};$$
dually, ${}^\perp{}D$ 
is the class $
 \{Y\in \obj \cu:\ Y\perp X\ \forall X\in D\}$.



\item For an object $M$ of $\cu$ we will write $H_M$ for the functor $\cu(-,M)$.

\end{itemize}

\subsection{On various torsion theories and 
 projective classes}\label{ssws}

Our central definition is the following one.

\begin{defi}\label{dhop}
 A couple $s$ of classes $\lo,\ro\subset\obj \cu$ 
is said to be a {\it torsion theory} (on $\cu$) if $\lo^{\perp}=\ro$,  $\lo={}^{\perp}\ro$, and 
for any $M\in\obj \cu$ there
exists a distinguished triangle
\begin{equation}\label{swd}
L_sM\stackrel{a_M}{\longrightarrow} M\stackrel{n_M}{\longrightarrow} R_sM
{\to} L_sM[1]\end{equation} 
such that $L_sM\in \lo $ and $ R_sM\in \ro$. We will call any triangle of this form an {\it $s$-decomposition} of $M$. 
\end{defi}

We also need a collection of related definitions.

\begin{defi}\label{dhopo}
\begin{enumerate}
Let $s$ be a torsion theory and $n\in \z$.

\item\label{itw}
We will say that $s$ is {\it weighted} if $\lo\subset \lo[1]$.

In this case we  call the couple $w=(\lo, \ro[-1])$ a {\it weight structure} and say  that $s$ is associated with $w$.  
 We write $\cu_{w\le 0}$ and  $\cu_{w\ge 0}$ for $\lo$ and $ \ro[-1]$, respectively; furthermore, we set $\cu_{w\le n}=\cu_{w\le 0}[n]$ and $\cu_{w\ge n}=\cu_{w\ge 0}[n]$.

\item\label{itt}
 If $\lo\subset \lo[-1]$ then we  call the couple $t=(\cu_{t\le 0}=\ro[1],\ \cu_{t\ge 0}=\lo)$  a {\it $t$-structure} and say  that $s$ is associated with $t$. 
We set  $\cu_{t\le n}=\cu_{t\le 0}[n]$ and $\cu_{t\ge n}=\cu_{t\ge 0}[n]$. Moreover, if $m\le n$ then we set $\cu[m,n]=\cu_{t\le n}\cap \cu_{t\ge m}$.

\item\label{itwadj} We say that a weight structure $w$ and a $t$-structure $t$ (as above) are {\it adjacent} if $\cu_{w\ge 0}=\cu_{t\ge 0}$.  

\item\label{ittg}
We will say (following \cite[Definition 3.1]{postov}) that $s$ is {\it generated by $\cp\subset \obj \cu$} if $\cp^\perp=\ro$. 

\end{enumerate}
\end{defi}

\begin{rema}\label{rtt}
\begin{enumerate}
\item\label{irtt2}
Our definition of torsion theory actually follows \cite[Definition 3.2]{postov} (where torsion theories were called {\it complete Hom-orthogonal pairs})  and is somewhat different
from Definition 2.2 of \cite{iyayo}, from which our term comes from.  However,  these two definitions are  well-known to be equivalent; see  Proposition 2.4(2,9) and Remark 2.5(1) of \cite{bvt}. 

\item\label{irttwt} Our definitions of weight and $t$-structures are equivalent to the ones given in \cite{bkwn} and \cite{bpws};  see Proposition 3.2 of \cite{bvt}. Note however that in 
  \cite{bpws} the so-called homological conventions for  weight and $t$-structures were used, and they differ from the original ("cohomological") conventions originally introduced in \cite{bbd} and \cite{bws} (cf. Remarks 1.2.4(4) and 2.2.3(3) of \cite{bpws}).\footnote{Also recall that D. Pauksztello has introduced weight structures independently 
 (in \cite{konk}); he called them co-t-structures.} 

\item\label{irttadj}
 Dually to Definition  \ref{dhopo}(\ref{itwadj}), one can also consider the case where  $\cu_{w\le 0}=\cu_{t\le 0}$; cf. Definition 2.2.2(6) of \cite{bpws} where both of the versions are defined.  However, we will not need this 
 dual 
  setting in the current paper. 
\end{enumerate}
\end{rema}


Let us recall some more definitions.

\begin{defi}\label{dinj}
 Assume $\cp \subset \obj \cu$ and $\jo\subset \mo(\cu)$.

1. We will say that a $\cu$-morphism $h$ is {\it $\cp$-null} (resp. {\it $\cp$-conull})  whenever for all $M\in \cp$  we have $\cu(M,h)=0$ (resp. $\cu(h,M)=0$). 



2. The couple $(\cp,\jo)$ is said to be a {\it projective class} whenever the following conditions are fulfilled:

(i) $\jo$ is the class of all $\cp$-null morphisms;

	(ii) If $M\in \obj\cu$  then the functor $\cu(M,-)$ annihilates all elements of $\jo$ if and only if $M\in \cp$.

(iii) For any $M\in\obj \cu$ there
exists a distinguished triangle
\begin{equation}\label{iwd}
 PM\stackrel{i_M}{\to} M\stackrel{j_M}{\to} 
 IM\to 
 PM[1]\end{equation} 
such that $P_M\in \cp$ and $j_M\in \jo$.

3. We say that $(\cp,\jo)$ is an {\it injective class} if it becomes a projective class in the category $\cu\opp$.
\end{defi}


\subsection{On weight range and virtual $t$-truncations}\label{srange}

Let us recall some properties of truncations with respect to weight and $t$-structures. Recall that $w$ (resp. $t$) always denotes a  weight structure (resp.  $t$-structure) on $\cu$.

\begin{pr}\label{prtst}
Let $M,X,Y\in \obj \cu$. 

\begin{enumerate}
\item\label{itcan}
The decomposition triangle 
(\ref{swd}) is essentially canonically and functorially determined by $M$ if $s$ is associated with $t$ (see Definition \ref{dhopo}(\ref{itt})). 

Consequently, if $n\in \z$ and $N=M[n]$ then
  both $t_{\le n-1}N=R_sM[n]$ and $t_{\ge n}N=L_sM[n]$ are functorially determined by $N$. 


\item\label{it2s} Moreover, 
 if $m\le n$ 
 then $t_{\le n}(\cu_{t\ge m})= \cu[m,n]$.

\item\label{iwst}
 Assume that $s$ is associated with $w$ (see Definition \ref{dhopo}(\ref{itw})).  For any  $n\in \z$ and $N=M[n]$ we set  $w_{\le n}N=L_sM[n]$ and $w_{\ge n+1}N=R_sM[n]$. 
 
 Then for any $m\le l\in \z$ any $g\in \cu(X,Y)$ can be extended 
to a 
morphism of the corresponding distinguished triangles (cf. Remark \ref{rwd}(1) below):
 \begin{equation}\label{ecompl} \begin{CD} w_{\le m} X@>{}>>
X@>{}>> w_{\ge m+1}X\\
@VV{}V@VV{g}V@ VV{}V \\
w_{\le l} Y@>{}>>
Y@>{}>> w_{\ge l+1}Y \end{CD}
\end{equation}

Moreover, if $m<l$ then this extension is unique provided that the rows are fixed.
\end{enumerate}
\end{pr}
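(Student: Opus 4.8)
The plan is to assemble the morphism of triangles from weight decompositions of $X$ and $Y$ and from the orthogonality encoded in the torsion theory $s$. The input I would record first is that $\cu_{w\le a}\perp\cu_{w\ge b}$ whenever $a<b$. Indeed, the base case $\cu_{w\le 0}\perp\cu_{w\ge 1}$ is precisely the relation $\lo^{\perp}=\ro$ of Definition \ref{dhop} (recall $\lo=\cu_{w\le 0}$ and $\ro=\cu_{w\ge 1}$), and the general case follows by shifting together with the nestings $\cu_{w\le a}\subset\cu_{w\le a+1}$ and $\cu_{w\ge b+1}\subset\cu_{w\ge b}$; both nestings are consequences of the weightedness condition $\lo\subset\lo[1]$ of Definition \ref{dhopo}(\ref{itw}).

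For existence I would fix weight decompositions $w_{\le m}X\to X\to w_{\ge m+1}X$ and $w_{\le l}Y\to Y\to w_{\ge l+1}Y$ coming from (\ref{swd}) after the shifts described in the statement, with respective maps $a,b$ and $a',b'$. Because $m\le l$ we have $\cu(w_{\le m}X,w_{\ge l+1}Y)=0$, so the composite $b'\circ g\circ a$ vanishes; applying $\cu(w_{\le m}X,-)$ to the lower triangle then shows that $g\circ a$ factors as $a'\circ f$ for some $f\colon w_{\le m}X\to w_{\le l}Y$. This makes the left square of (\ref{ecompl}) commute, and completing that commutative square by the morphism-of-triangles axiom (TR3) yields a map $h\colon w_{\ge m+1}X\to w_{\ge l+1}Y$ rendering all of (\ref{ecompl}) a morphism of distinguished triangles.

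For uniqueness under the hypothesis $m<l$ I would compare two extensions $(f,g,h)$ and $(f',g,h')$ with the same rows and the same middle arrow $g$. The difference $f-f'$ satisfies $a'\circ(f-f')=0$; by the long exact sequence obtained from the lower triangle its class lies in the image of $\cu(w_{\le m}X,w_{\ge l+1}Y[-1])$. As $w_{\ge l+1}Y[-1]\in\cu_{w\ge l}$ and $w_{\le m}X\in\cu_{w\le m}$ with $m<l$, this group vanishes, whence $f=f'$. Dually, $h-h'$ is annihilated by precomposition with $b$, so by the long exact sequence from the upper triangle its class lies in the image of $\cu(w_{\le m}X[1],w_{\ge l+1}Y)$, which vanishes because $w_{\le m}X[1]\in\cu_{w\le m+1}$ and $m+1\le l$; hence $h=h'$. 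The one point demanding care is the bookkeeping of indices: existence needs only $m\le l$, whereas each uniqueness vanishing genuinely requires the strict inequality $m<l$, and it is here—rather than in any appeal to (TR3), which never provides uniqueness on its own—that the hypothesis $m<l$ enters.
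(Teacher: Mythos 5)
Your argument for item (\ref{iwst}) is correct and complete: the orthogonality $\cu_{w\le a}\perp\cu_{w\ge b}$ for $a<b$ is derived properly from Definition \ref{dhop} and the weightedness condition $\lo\subset\lo[1]$, the factorization of $g\circ a$ through $w_{\le l}Y$ followed by TR3 gives existence, and the two vanishing Hom-groups you exhibit for uniqueness are exactly where the strict inequality $m<l$ enters. This is essentially the standard argument --- it is the content of Lemma 1.5.1(1,2) of \cite{bws}, which is all the paper cites for this item --- so here you have written out in full what the paper delegates to a reference.

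The gap is that the proposition has three items and you have proved only the third. Item (\ref{itcan}) asserts that when $s$ is associated with a $t$-structure the decomposition triangle (\ref{swd}) is essentially canonical and functorial in $M$, and item (\ref{it2s}) asserts $t_{\le n}(\cu_{t\ge m})=\cu[m,n]$; neither is addressed anywhere in your text, and neither follows from your computation. Item (\ref{itcan}) rests on the opposite shift condition $\lo\subset\lo[-1]$, which yields $\cu(L_sM,R_sM'[-1])=\ns$ and hence makes the extension of a morphism to a morphism of decomposition triangles \emph{unique} (precisely the uniqueness that fails for weight decompositions, which is why item (\ref{iwst}) gives only weak functoriality); item (\ref{it2s}) needs the separate facts that $t_{\le n}$ preserves $\cu_{t\ge m}$ and that every object of $\cu[m,n]$ is its own $t_{\le n}$-truncation. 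The paper handles both by citing Propositions 1.3.3 and 1.3.5 of \cite{bbd}; your proposal needs either those citations or the corresponding arguments before it counts as a proof of the full statement.
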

\begin{proof}
\ref{itcan}. Well known; see Proposition 1.3.3 of  
  \cite{bbd} (yet pay attention to Remark \ref{rtt}(\ref{irttwt})). 

\ref{it2s}.  Proposition 1.3.5 of 
 ibid. immediately implies that  $t_{\le n}(\cu_{t\ge m})$ lies in $ \cu[m,n]$. 
    Next,  
 Proposition 1.3.3 of ibid. easily yields $ \cu[m,n]\subset t_{\le n}(\cu_{t\ge m})$. 

\ref{iwst}. Easy; see Lemma 1.5.1(1,2) of \cite{bws}.
\end{proof}

\begin{rema}\label{rwd}
1. The upper row in (\ref{ecompl}) is called an {\it $m$-weight decomposition} of $X$. It is easily seen that this triangle is not canonically determined by $(w,X)$. Yet it will be convenient for us to use this notation below. Moreover,  when we will write arrows of the type $w_{\le m}X\to X$ or $X\to w_{\ge m+1}X$ we will always assume that they come from some $m$-weight decomposition of $X$. 

2.  Proposition \ref{prtst}(\ref{iwst}) says that $m$-weight decompositions are "weakly functorial", that is, a morphism $g$ between objects extends to a morphism between any choices of their $m$-weight decompositions. According to Proposition 2.4(8) of \cite{bvt}, this property is fulfilled for  $s$-decomposition triangles (see (\ref{swd})) as well. 

\end{rema}

Now we 
 pass to weight filtrations and virtual $t$-truncations. 

\begin{defi}\label{dwfil}
Let $H$ be a 
contravariant functor from $\cu$ into $\au$ (where $\au$ is an abelian category), $m\le n\in \z$,  and $M\in \obj \cu$.

1. We define the weight filtration for $H(M)$  as $$W^m(H)(M)=\imm(H(w_{\ge m}M)\to H(M));$$ 
here we take an arbitrary choice of  $w_{\ge m}M$ and use the convention described in Remark \ref{rwd}. 

2.  We define the 
 correspondence (cf. Proposition \ref{pwfil}(\ref{iwfil1}) below)   $\tau_{\le m }(H)$ 
  as  $$M\mapsto\imm (H(w_{\le m+1}M)\to H(w_{\le m}M)) ;$$ here we take arbitrary choices of  
$w_{\le m}M$ and $ w_{\le m+1}M$, and take $X=Y=M$ and $g=\id_M$ in (\ref{ecompl}).

3. 
If $H$ is cohomological (that is, it converts distinguished triangles into long exact sequences), we will say that it is {\it of weight range} $\ge m$ if it annihilates $\cu_{w\le m-1}$. 
 $H$ is   {\it of weight range $[m,n]$} if it  annihilates $\cu_{w\ge n+1}$ as well.
\end{defi}

\begin{pr}\label{pwfil}
In the notation of the previous definition the  following statements are valid.

\begin{enumerate}

\item\label{iwfil1} $W^mH(M)$ and  $\tau_{\le m}(H)$ are $\cu$-functorial in $M$ (for any $m$; in particular, they essentially do not depend on the choices of the corresponding  weight decompositions of $M$).

\item\label{iwfil2} If $H$ is cohomological then the functor  $\tau_{\le m}(H)$ also is.

\item\label{iwfil3} The functor $H_M=\cu(-,M)$ 
 is of weight range $\ge m$ if and only if $M\in \cu_{w\ge m}$.

\item\label{iwfil4} If $H$ is 
 of weight range $\ge m$  then  $\tau_{\le n}(H)$ is 
    of weight range $[m,n]$.

\item\label{iwfil5} If $H$ is of weight range $[m,n]$ then the morphism  $H(w_{\ge m}M)\to H(M)$ is surjective and the morphism  
$H(M)\to H(w_{\le n}M)$ is injective (here we take arbitrary choices of the corresponding weight decompositions of $M$ and apply $H$ to their connecting morphisms). 

\item\label{iwadjt}
 Assume that there exists a $t$-structure $t$ adjacent to $w$. Then  
 the functor  $\tau_{\le m}(H_M)$ 
 is represented by $t_{\le m}M$.


\end{enumerate}

\end{pr}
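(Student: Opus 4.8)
The plan is to treat the six claims in turn, using throughout the weak functoriality of weight decompositions (Proposition \ref{prtst}(\ref{iwst})) and the orthogonality relations $\cu_{w\le m-1}\perp \cu_{w\ge m}$ encoded in the torsion theory associated with $w$. For (\ref{iwfil1}) I would first recall the standard uniqueness-up-to-isomorphism of weight decompositions (\cite{bws}): any two choices of $w_{\ge m}M$, or of a pair $(w_{\le m}M,w_{\le m+1}M)$, are linked by an invertible morphism compatible with the structural maps to and from $M$; moreover the morphism $w_{\le m}M\to w_{\le m+1}M$ entering the definition of $\tau_{\le m}$ is itself unique by the last clause of Proposition \ref{prtst}(\ref{iwst}), since $m<m+1$. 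Applying the contravariant $H$ turns these comparison maps into isomorphisms respecting the relevant maps into $H(M)$ and $H(w_{\le m}M)$, so the images defining $W^mH(M)$ and $\tau_{\le m}(H)(M)$ are independent of the choices; functoriality in $M$ then follows by extending a morphism $M\to N$ to a morphism of weight decompositions, applying $H$ to obtain commutative squares, and passing to images.

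For (\ref{iwfil3}), being of weight range $\ge m$ means $\cu(N,M)=0$ for all $N\in\cu_{w\le m-1}$, i.e. $M\in{}^{\perp}(\cu_{w\le m-1})^{\perp}$ rewritten as $M\in(\cu_{w\le m-1})^{\perp}$; the torsion-theory identity $\lo^{\perp}=\ro$ (Definitions \ref{dhop} and \ref{dhopo}(\ref{itw})) gives exactly $(\cu_{w\le m-1})^{\perp}=\cu_{w\ge m}$. For (\ref{iwfil4}) I would evaluate $\tau_{\le n}(H)$ on the two extreme classes using the degenerate weight decompositions allowed by Remark \ref{rwd} together with (\ref{iwfil1}): if $N\in\cu_{w\le m-1}\subseteq\cu_{w\le n}$ one may take $w_{\le n}N=w_{\le n+1}N=N$, so $\tau_{\le n}(H)(N)=\imm(\id_{H(N)})=H(N)=0$; if $N\in\cu_{w\ge n+1}$ one may take $w_{\le n}N=0$, so the image lies in $H(0)=0$. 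Hence $\tau_{\le n}(H)$ annihilates both $\cu_{w\le m-1}$ and $\cu_{w\ge n+1}$, which with (\ref{iwfil2}) is the claim. Part (\ref{iwfil5}) is a one-step chase: applying the cohomological $H$ to $w_{\le m-1}M\to M\to w_{\ge m}M$ yields $H(w_{\ge m}M)\to H(M)\to H(w_{\le m-1}M)$ with $H(w_{\le m-1}M)=0$, forcing the first map to be surjective; dually, applying $H$ to $w_{\le n}M\to M\to w_{\ge n+1}M$ and using $H(w_{\ge n+1}M)=0$ forces $H(M)\to H(w_{\le n}M)$ to be injective.

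The genuine obstacle is (\ref{iwfil2}), cohomologicity of $\tau_{\le m}(H)$, since the image appearing in the definition interacts badly with long exact sequences and no purely formal manipulation suffices. My plan is to invoke the theory of virtual $t$-truncations: given a distinguished triangle $X\to Y\to Z\to X[1]$, I would use weak functoriality and the octahedral axiom to build a compatible system of $m$- and $(m+1)$-weight decompositions of $X,Y,Z$, and then feed the long exact sequences of the cohomological $H$ into a nine-lemma-type diagram chase on the resulting images to produce the long exact sequence for $\tau_{\le m}(H)$. This is precisely the content worked out in \cite{bws}, which I would cite rather than reprove.

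Finally, for (\ref{iwadjt}) I would exploit adjacency in the strong form it entails, namely $\cu_{w\ge k}=\cu_{w\ge 0}[k]=\cu_{t\ge 0}[k]=\cu_{t\ge k}$ for all $k$ (Definition \ref{dhopo}(\ref{itwadj})), so that weight and $t$ share all $\ge$-classes. The argument splits in two stages. First, applying $\cu(X,-)$ to the truncation triangle $t_{\ge m+1}M\to M\to t_{\le m}M\to t_{\ge m+1}M[1]$ and using the vanishings $\cu(\cu_{w\le m},\cu_{w\ge m+1})=0$ and $\cu(\cu_{w\le m+1},\cu_{w\ge m+2})=0$, one checks that $\cu(w_{\le m}N,M)\to\cu(w_{\le m}N,t_{\le m}M)$ is an isomorphism while $\cu(w_{\le m+1}N,M)\to\cu(w_{\le m+1}N,t_{\le m}M)$ is surjective; the naturality square for the connecting map $w_{\le m}N\to w_{\le m+1}N$ then identifies $\tau_{\le m}(H_M)$ with $\tau_{\le m}(H_{t_{\le m}M})$, reducing to the case $M\in\cu_{t\le m}$. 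Second, for $M\in\cu_{t\le m}$ I would apply $\cu(-,M)$ to both weight decompositions of $N$: the vanishings $\cu(\cu_{t\ge m+2},\cu_{t\le m})=0$ and $\cu(\cu_{t\ge m+1},\cu_{t\le m})=0$ make the restriction $r_{m+1}\colon\cu(N,M)\to\cu(w_{\le m+1}N,M)$ an isomorphism and $r_m\colon\cu(N,M)\to\cu(w_{\le m}N,M)$ injective; since these restrictions satisfy $r_m=\alpha^{*}\circ r_{m+1}$ for the connecting map $\alpha\colon w_{\le m}N\to w_{\le m+1}N$, the defining image $\tau_{\le m}(H_M)(N)=\imm(\alpha^{*})$ equals $\imm(r_m)\cong\cu(N,M)$, naturally in $N$. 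Combining the two stages gives $\tau_{\le m}(H_M)\cong\cu(-,t_{\le m}M)$, as asserted.
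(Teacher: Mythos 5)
Your proof of part (\ref{iwfil1}) rests on a false statement: weight decompositions are \emph{not} unique up to isomorphism, not even non-canonically. This is precisely the point where weight structures differ from $t$-structures (for which Proposition \ref{prtst}(\ref{itcan}) gives canonicity). For a counterexample, take $\cu=K^b(\abfr)$ with the "stupid" weight structure: for the contractible complex $T=(\z\stackrel{\id}{\to}\z)$ the termwise split short exact sequence of its stupid truncations yields a distinguished triangle with middle term $0\cong T$ and nonzero outer terms, so the zero object acquires a weight decomposition with both pieces isomorphic to $\z$ placed in single degrees, which is not isomorphic to the trivial decomposition $0\to 0\to 0$; more generally, adding contractible complexes changes stupid truncations without changing the object. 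Consequently your step "applying the contravariant $H$ turns these comparison maps into isomorphisms" fails, since the comparison morphisms between two choices need not be invertible. The conclusion of (\ref{iwfil1}) is nevertheless true, and the repair uses only what you already quoted from Proposition \ref{prtst}(\ref{iwst}): given two choices, extend $\id_M$ to comparison morphisms $\phi,\psi$ in \emph{both} directions. For $W^m(H)(M)$ both candidate images lie in the single group $H(M)$, and the two factorizations give mutual containment, hence equality. For $\tau_{\le m}(H)$ the images lie in different groups $H(w_{\le m}M)$, so one instead uses the uniqueness clause for $m<m+1$: if $\alpha\colon w_{\le m}M\to w_{\le m+1}M$ is the connecting morphism, then $\alpha\circ\psi\phi$ and $\alpha$ both extend $\id_M$, whence $\alpha\circ\psi\phi=\alpha$, so $H(\psi\phi)$ acts as the identity on $\imm H(\alpha)$ and $H(\phi),H(\psi)$ restrict to mutually inverse isomorphisms between the two candidate images. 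This is exactly the content of Proposition 2.1.2(2) of \cite{bws}, which the paper cites for this part.

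Apart from this, your argument is correct, and it takes a genuinely different route from the paper, which proves nothing directly: it cites Propositions 2.1.2(2), 2.5.1(I, III.2) and Theorem 4.4.2(8) of \cite{bws} for parts (\ref{iwfil1}), (\ref{iwfil2}) and (\ref{iwadjt}), declares (\ref{iwfil3}) immediate, and cites Proposition 2.1.4(8,12) of \cite{bvtr} for (\ref{iwfil4}) and (\ref{iwfil5}). Your direct verifications are all sound: the torsion-theory identity $(\cu_{w\le m-1})^{\perp}=\cu_{w\ge m}$ for (\ref{iwfil3}); the degenerate choices $w_{\le n}N=w_{\le n+1}N=N$ on $\cu_{w\le m-1}$ and $w_{\le n}N=0$ on $\cu_{w\ge n+1}$ for (\ref{iwfil4}) (which correctly leans on (\ref{iwfil1}) and (\ref{iwfil2}), the latter rightly delegated to \cite{bws}, as in the paper); the two exact-sequence chases for (\ref{iwfil5}); and the two-stage proof of (\ref{iwadjt}), whose orthogonality bookkeeping I checked — the reduction to $M\in\cu_{t\le m}$ via $\cu_{w\le m}\perp\cu_{w\ge m+1}$ and $\cu_{w\le m+1}\perp\cu_{w\ge m+2}$, and then the identification $\imm(\alpha^{*})=\imm(r_m)\cong\cu(N,M)$ via $\cu_{t\ge m+1}\perp\cu_{t\le m}$, which in effect reconstructs the proof of Theorem 4.4.2(8) of \cite{bws}. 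What your approach buys is a self-contained treatment from the axioms; what it costs is the slip above, which shows why the cited sources phrase uniqueness for weight truncations only at the level of the functors $W^m(H)$ and $\tau_{\le m}(H)$, never at the level of the truncation objects themselves.
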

\begin{proof}
Assertions  \ref{iwfil1}, \ref{iwfil2}, and \ref{iwadjt} were established in \cite{bws}; see Propositions 2.1.2(2) and 
 2.5.1(I, III.2) and Theorem 4.4.2(8) of ibid. (and 
 mind the difference in notation).  



Assertion \ref{iwfil3} is an immediate consequence of our definitions. 

Assertions \ref{iwfil4} and  \ref{iwfil5} are given by Proposition 2.1.4(8,12) of \cite{bvtr}. 
\end{proof}

\section{On morphisms killing weights and objects without weights in a range}\label{skw}

 In this section 
 we recall the central definitions of \cite{bkwn}. We also prove some new criteria for  killing weights  (in terms of virtual $t$-truncations and weight filtrations; see Theorem \ref{tkw}) and for the absence of weights $m,\dots,n$ (in objects; see Theorem \ref{tww}).

\subsection{On morphisms killing weights: definitions and new criteria}\label{skwm}

\begin{defi}\label{dkw}
 Let $m\le n\in \z$.

 We say that a morphism $g\in \cu(M,N)$ in a weighted category {\it kills weights} $m,\dots,n$ whenever $g$ fulfils  the following equivalent conditions (see Proposition 2.1.1 of \cite{bkwn}).

1. There exists a choice of  $w_{\le n}M$ and $w_{\le m-1}N$ 
 along with the corresponding connecting morphisms (see Remark \ref{rwd}(1)) and a morphism $h$ making the following square commutative:
\begin{equation}\begin{CD} \label{ekw}
w_{\le n}M@>{x}>>M
\\
@VV{
h}V@VV{g}V\\
w_{\le m-1}N@>{}>>N 
\end{CD}\end{equation}

2. There exists a choice of  $w_{\le n}M$ and $w_{\ge m}N$ such that the corresponding composed morphism $w_{\le n}M\stackrel{x}{\to}  M\stackrel{g}{\to}N\stackrel{y}{\to}    w_{\ge m}N$ is zero. 

3. Any choice of the rows in 
the diagram
\begin{equation}\label{eccompl} \begin{CD} w_{\le n} M@>{x}>>
M@>{}>> w_{\ge n+1}M\\
@VV{h}V@VV{g}V@ VV{}V \\
w_{\le m-1} N@>{}>>
N@>{y}>> w_{\ge m}N \end{CD}\end{equation}
can be completed to the whole diagram.

We will write $\mo_{\cancel{[m,n]}}\cu$ for the class of $\cu$-morphisms that  kill weights $m,\dots,n$.
\end{defi}

Now we relate these conditions to weight range of functors (see Definition \ref{dwfil}).

\begin{theo}\label{tkw}

Adopt the notation of Definition \ref{dkw}.
Then  the following 
conditions are equivalent.

\begin{enumerate}
\item\label{iwf1} $g$ kills weights $m,\dots,n$.

\item\label{iwf2} $H(g)$ sends $W^{m}(H)(N)$ inside  $W^{n+1}(H)(M)$ for any contravariant functor $H:\cu\to \au$.

\item\label{iwf3} $H(g)$ sends $W^{m}(H_I)(N)$ inside  $W^{n+1}(H_I)(M)$ for 
all $I\in \cu_{w\ge m}$.

\item\label{iwf4} $H(g)=0$ if $H$ is an arbitrary cohomological functor  ($\cu\to \au$) of weight range $[m,n]$.

\item\label{iwf5} $H(g)=0$ for $H=\tau_{\le n }(H_I)$ whenever $I\in \cu_{w\ge m}$.

\item\label{iwf0} $H(g)=0$, where $H=\tau_{\le n }(H_{I_0})$ and $I_0$ is some  fixed choice of  $w_{\ge m}N$. 

\end{enumerate}

\end{theo}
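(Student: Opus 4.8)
The plan is to prove the six conditions equivalent by establishing a cycle of implications, exploiting the weight-range machinery from Proposition \ref{pwfil} and the three equivalent reformulations of killing weights in Definition \ref{dkw}. The cleanest route appears to be: first show that conditions \eqref{iwf2}, \eqref{iwf3}, and \eqref{iwf5} collapse onto each other by specialization and by the representability result Proposition \ref{pwfil}(\ref{iwadjt}) is \emph{not} available here (no adjacent $t$ is assumed), so instead I would route the argument through \eqref{iwf4}, the cohomological functors of weight range $[m,n]$, which is the conceptual hinge. Concretely I would aim for the cycle
$$\eqref{iwf1}\implies\eqref{iwf2}\implies\eqref{iwf3}\implies\eqref{iwf5}\implies\eqref{iwf0}\implies\eqref{iwf1},$$
treating \eqref{iwf4} as a lemma inserted between \eqref{iwf1} and \eqref{iwf5}.

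\textbf{The main implications.}
First I would prove $\eqref{iwf1}\implies\eqref{iwf4}$: if $g$ kills weights $m,\dots,n$, use condition 2 of Definition \ref{dkw}, namely that $w_{\le n}M\xrightarrow{x} M\xrightarrow{g} N\xrightarrow{y} w_{\ge m}N$ vanishes. For a cohomological $H$ of weight range $[m,n]$, Proposition \ref{pwfil}(\ref{iwfil5}) tells us that $H(N)\to H(w_{\le n}N)$ is injective and $H(w_{\ge m}M)\to H(M)$ is surjective; chasing $H$ applied to the diagram \eqref{eccompl} and using that $H$ annihilates both $\cu_{w\le m-1}$ and $\cu_{w\ge n+1}$ forces $H(g)=0$. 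Next, $\eqref{iwf4}\implies\eqref{iwf5}$ is immediate once we check that $\tau_{\le n}(H_I)$ for $I\in\cu_{w\ge m}$ really is cohomological of weight range $[m,n]$: this is exactly Proposition \ref{pwfil}(\ref{iwfil2}) together with \ref{iwfil3} and \ref{iwfil4}, since $H_I$ has weight range $\ge m$ precisely when $I\in\cu_{w\ge m}$. The implication $\eqref{iwf5}\implies\eqref{iwf0}$ is trivial specialization to $I=I_0$.

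\textbf{Closing the cycle and the weight-filtration conditions.}
The implication $\eqref{iwf0}\implies\eqref{iwf1}$ is the step I expect to be the crux. Here I would unwind the definition of $\tau_{\le n}(H_{I_0})$ as $\imm(H_{I_0}(w_{\le n+1}(-))\to H_{I_0}(w_{\le n}(-)))$ and translate the vanishing of $\tau_{\le n}(H_{I_0})(g)$ back into a factorization statement, using that $I_0$ is a chosen $w_{\ge m}N$; the point is that applying $H_{I_0}=\cu(-,I_0)$ to the candidate composite $w_{\le n}M\to M\to N\to I_0$ and tracking it through the virtual truncation recovers precisely condition 2 of Definition \ref{dkw}. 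This is where the bookkeeping between the universal property of $w_{\ge m}N$ and the definition of the virtual $t$-truncation must be done carefully. Finally, the weight-filtration conditions fit in as follows: $\eqref{iwf1}\implies\eqref{iwf2}$ follows by applying any contravariant $H$ to the square \eqref{ekw}, since $W^m(H)(N)=\imm(H(w_{\ge m}N)\to H(N))$ and the factorization forces the image to land in $W^{n+1}(H)(M)$; the implication $\eqref{iwf2}\implies\eqref{iwf3}$ is specialization to $H=H_I$, and $\eqref{iwf3}\implies\eqref{iwf5}$ connects the weight filtration of a representable functor to its virtual truncation via the definitions in Definition \ref{dwfil}. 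I would expect the representability-free handling of \eqref{iwf0}, and the precise identification of the image of the connecting map with the weight filtration, to be the two places demanding the most care.
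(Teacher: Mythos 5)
Your overall architecture largely coincides with the paper's: the implications \ref{iwf1}$\implies$\ref{iwf2}$\implies$\ref{iwf3}, \ref{iwf1}$\implies$\ref{iwf4}$\implies$\ref{iwf5}$\implies$\ref{iwf0}, and \ref{iwf0}$\implies$\ref{iwf1} are exactly the ones proved there, by essentially the same arguments. The genuine gap is the step you use to close the cycle, namely \ref{iwf3}$\implies$\ref{iwf5}. You justify it only by saying that it ``connects the weight filtration of a representable functor to its virtual truncation via the definitions'', but no such direct connection is available: for a cohomological $H$ the natural map $H(M)\to \tau_{\le n}(H)(M)$ has kernel $W^{n+1}(H)(M)$ but need not be surjective (an element of $\imm(H(w_{\le n+1}M)\to H(w_{\le n}M))$ need not come from $H(M)$), so knowing where $H_I(g)$ sends the weight filtration does not by itself control $\tau_{\le n}(H_I)(g)$ on all of $\tau_{\le n}(H_I)(N)$. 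As written, conditions \ref{iwf2} and \ref{iwf3} are only shown to be \emph{necessary} for killing weights. The paper closes the cycle differently and more cheaply: it proves \ref{iwf3}$\implies$\ref{iwf1} directly by taking $I=w_{\ge m}N$ and applying condition \ref{iwf3} to the element $y\in W^{m}(H_I)(N)$ (the image of $\id_I$ under precomposition with $y$); the conclusion that $y\circ g$ lies in $\imm\bigl(\cu(w_{\ge n+1}M,w_{\ge m}N)\to\cu(M,w_{\ge m}N)\bigr)$ is precisely the factorization required in Definition \ref{dkw}. Replacing your \ref{iwf3}$\implies$\ref{iwf5} by this one-line specialization repairs the proof.

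Two smaller slips. In \ref{iwf1}$\implies$\ref{iwf4} you invoke the wrong halves of Proposition \ref{pwfil}(\ref{iwfil5}): the chase needs surjectivity of $H(w_{\ge m}N)\to H(N)$ and injectivity of $H(M)\to H(w_{\le n}M)$, not the statements at the opposite objects that you quote (both are supplied by the cited proposition, so this is harmless once corrected). In \ref{iwf1}$\implies$\ref{iwf2} the relevant commutative square is the right-hand one of (\ref{eccompl}) (that is, (\ref{ekwn}), involving $w_{\ge n+1}M$ and $w_{\ge m}N$), not the left-hand square (\ref{ekw}); the filtrations $W^{m}$ and $W^{n+1}$ are defined through the $w_{\ge}$-truncations, so it is to that square that $H$ must be applied. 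Your treatment of \ref{iwf0}$\implies$\ref{iwf1} matches the paper's and is fine modulo the bookkeeping you yourself flag.
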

\begin{proof}

Clearly, condition  \ref{iwf2} implies condition  \ref{iwf3}. Next,  \ref{iwf4} implies condition  \ref{iwf5} by Proposition \ref{pwfil}(\ref{iwfil3},\ref{iwfil4}), and the latter condition clearly implies condition  \ref{iwf0}. 

 Now assume  that $g$ kills weights $m,\dots,n$. Then we have a commutative diagram
\begin{equation}\begin{CD} \label{ekwn}
 M @>{}>>w_{\ge n+1}M
\\
@VV{
g}V@VV{j}V\\
N @>{y}>>w_{\ge m}N 
\end{CD}\end{equation}
(it does not matter here whether we fix some choices of the rows or not; see condition 3 in Definition \ref{dkw}). Applying $H$ to this diagram, we obtain condition  \ref{iwf2}.

Next we fix some choice of the rows of (\ref{ekwn}) and take $I=w_{\ge m}N$. 
 Assume that $g$ fulfils condition  \ref{iwf3}; then the morphism $d\circ g$  belongs to the image of $\cu(w_{\ge n+1}M,w_{\ge m}N)$  in $\cu(M,w_{\ge m}N)$. Thus there exists a choice of $j$ that makes (\ref{ekwn}); hence $g$ kills weights $m,\dots,n$ (see condition 1 in Definition \ref{dkw}).  

It remains to deduce condition  \ref{iwf4} from  \ref{iwf1}, and deduce the latter one from condition   \ref{iwf0}.

Assume  that $g$ kills weights $m,\dots,n$. If $H$ is a (cohomological) functor of weight range $[m,n]$ then the morphism $H(y):H(w_{\ge m}N)\to H(N)$ is surjective and the morphism  
$H(x):H(M)\to H(w_{\le n}M)$ is injective (for any choices of the corresponding weight decompositions); see Proposition \ref{pwfil}(\ref{iwfil5}).  Since the composed morphism $a:w_{\le n}M\to w_{\ge m}N$ is zero (see 
 condition 3 in Definition \ref{dkw}; thus $H(a)=0$ as well), we obtain  condition  \ref{iwf4}. 

Now assume that  condition   \ref{iwf0} is fulfilled. Consider the element $r$ of the group $ \tau_{\le n }(H_{I_0})(N)=\imm(\cu(w_{\le n+1}N,I_0)\to \cu(w_{\le n}N,I_0)) $ 
  obtained by 
composing the corresponding connecting morphisms (recall that $I_0=w_{\ge m}N$). Since $r$ vanishes in  $\tau_{\le n }(H_{I_0})(M)\subset \cu(w_{\le n}M,I_0)$, the composed morphism $a\in \cu(w_{\le n}M, w_{\ge m}N)$ is zero. Thus  we obtain condition  \ref{iwf1}; see condition 2 in Definition \ref{dkw}.
\end{proof}

\begin{rema}\label{rkwz}
 In \cite{bkwn} much attention was paid to morphisms {\it killing weight $m$}, 
  that is, to the case $n=m$ of Definition \ref{dkw} (cf. Remark \ref{rsh}(1)  below).  
   Recall that functors of weight range $[0,0]$ are the (cohomological) {\it pure} ones in the sense of \cite[Definition 2.4.1]{bkwn}, and they can be expressed in terms of {\it weight complexes}; see \S1.3 and Theorem 2.4.2(3) of ibid. This yields a relation between weight complexes and killing weight $m$; see Theorem 2.3.1(1) of ibid. 
   
 
 On the other hand, virtual $t$-truncations and weight filtrations are not mentioned in ibid.
\end{rema} 

\subsection{On objects without weights $m,\dots, n$}\label{skww}

Now we pass to a class of objects that is important for this paper.

\begin{defi}\label{dww}
We say that an object $M$ of a weighted category $\cu$ is {\it without weights} $m,\dots, n$ (for $m\le n\in \z$) whenever $\id_M$ kills these weights.

We write $\cu_{w\notin{[m,n]}}$ for the class of objects of this sort. 
\end{defi}

\begin{theo}\label{tww}
 I. Under the assumptions  of Definition \ref{dww} the following 
conditions are equivalent.
\begin{enumerate}
\item\label{iwf6} $M$ is without weights $m,\dots,n$.

\item\label{iwf6n} $M$  is a retract of some $\widetilde M\in \obj \cu$ 
 that is  an extension of an element of $\cu_{w\ge n+1}$ by  an element of $\cu_{w\le m-1}$.

\item\label{iwf7} $H(M)=0$ 
 if $H$ of weight range $[m,n]$.

\item\label{iwf8} $H(M)=\ns$  for $H=\tau_{\le n }(H_I)$ 
 whenever $I\in \cu_{w\ge m}$. 

\item\label{iwf9} $H(M)=\ns$, where $H=\tau_{\le n }(H_{I_0})$ and $I_0$ is a  fixed choice of  $w_{\ge m}M$. 

\end{enumerate}

II. Moreover, if $\cu$ is {\it idempotent complete}, that is, if all idempotent endomorphisms split in it (cf. Proposition 1.6.8 of \cite{neebook}), then the conditions above are also equivalent to the following ones.

1. $M$ is  an extension of an element of $\cu_{w\ge n+1}$ by  an element of $\cu_{w\le m-1}$ itself.

2. There exists isomorphic choices of the weight truncations $w_{\le m-1}M$ and  $w_{\le n}M$.

3. There exists a choice of $w_{\le m-1}M$ and  $w_{\le n}M$ such that the corresponding connecting morphism defined as in Definition \ref{dwfil}(2) is an isomorphism.

\end{theo}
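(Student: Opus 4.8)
The plan is to read off most of Part~I directly from Theorem~\ref{tkw} and to isolate condition~\ref{iwf6n} as the only genuinely new point. By Definition~\ref{dww}, condition~\ref{iwf6} says exactly that $g=\id_M$ kills weights $m,\dots,n$; since $H(\id_M)=\id_{H(M)}$ vanishes if and only if $H(M)=0$, the equivalences \ref{iwf6}$\Leftrightarrow$\ref{iwf7}$\Leftrightarrow$\ref{iwf8}$\Leftrightarrow$\ref{iwf9} are precisely conditions \ref{iwf1}, \ref{iwf4}, \ref{iwf5}, \ref{iwf0} of Theorem~\ref{tkw} specialised to this $g$. So the remaining task in Part~I is to fit in \ref{iwf6n}. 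For the easy half I would first observe that any extension $\widetilde M$ of some $B\in\cu_{w\ge n+1}$ by some $A\in\cu_{w\le m-1}$ is itself without these weights: its defining triangle $A\to\widetilde M\to B$ is at once an $(m-1)$- and an $n$-weight decomposition (as $\cu_{w\le m-1}\subset\cu_{w\le n}$ and $\cu_{w\ge n+1}\subset\cu_{w\ge m}$), so the composite $w_{\le n}\widetilde M\to\widetilde M\to w_{\ge m}\widetilde M$ may be chosen to be the vanishing composite $A\to\widetilde M\to B$, verifying condition~2 of Definition~\ref{dkw}. Since the property cut out by \ref{iwf7} is manifestly stable under retracts (a cohomological functor sends a retract of $\widetilde M$ to a retract of $H(\widetilde M)$), condition~\ref{iwf6n} implies \ref{iwf7}, hence \ref{iwf6}.

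The substance of Part~I is the implication \ref{iwf6}$\Rightarrow$\ref{iwf6n}. Here I would fix weight decomposition triangles $w_{\le n}M\xrightarrow{x}M\xrightarrow{u}w_{\ge n+1}M\xrightarrow{\partial_1}w_{\le n}M[1]$ and $w_{\le m-1}M\xrightarrow{c}M\xrightarrow{y}w_{\ge m}M\xrightarrow{\partial_2}w_{\le m-1}M[1]$, and let $(h,\id_M,j)$ be the morphism completing (\ref{eccompl}) supplied by condition~3 of Definition~\ref{dkw}; in particular $\partial_2\circ j=h[1]\circ\partial_1$. I then define $\widetilde M$ by the triangle $w_{\le m-1}M\xrightarrow{\alpha}\widetilde M\xrightarrow{\beta}w_{\ge n+1}M\xrightarrow{\delta}w_{\le m-1}M[1]$ with $\delta=h[1]\circ\partial_1$, which is an extension of the required shape. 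Because $\delta=h[1]\circ\partial_1$ and $\partial_2\circ j=\delta$, axiom TR3 completes the evident squares to morphisms of triangles $(h,p,\id)$ from the first triangle to the triangle defining $\widetilde M$ and $(\id,q,j)$ from the latter to the second triangle. Their composite $(h,qp,j)$ and the given $(h,\id_M,j)$ share both outer components, so $f:=qp-\id_M$ satisfies $f\circ x=0$ and $y\circ f=0$; hence $f$ factors both as $f'\circ u$ (through the cone $w_{\ge n+1}M$ of $x$) and as $c\circ g$ (through the cocone $w_{\le m-1}M$ of $y$). Consequently $f\circ f=f'\circ(u\circ c)\circ g=0$, since $u\circ c=0$ by the orthogonality $\cu_{w\le m-1}\perp\cu_{w\ge n+1}$. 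Thus $qp=\id_M+f$ is invertible, $(qp)^{-1}q$ retracts $p$, and $M$ is a retract of $\widetilde M$, establishing \ref{iwf6n}.

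For Part~II I would work under idempotent completeness. Condition~II.1 trivially yields \ref{iwf6n}, so the point is to upgrade the retract above to a genuine summand. Taking $p,q'$ with $q'p=\id_M$, the endomorphism $e=pq'$ of the extension $\widetilde M$ is idempotent with $\imm(e)\cong M$; transporting $e$ along the (weakly functorial) decomposition $A\xrightarrow{\alpha}\widetilde M\xrightarrow{\beta}B$ via Remark~\ref{rwd}(2) yields $e_A,e_B$ with $\alpha e_A=e\alpha$ and $\beta e=e_B\beta$. The key observation is that $e_A$ and $e_B$ are automatically idempotent: $e_A^2-e_A$ kills $\alpha$ after composition and so factors through the cocone of $\alpha$, landing in $\cu(A,B[-1])$, which vanishes because $\cu_{w\le m-1}\perp\cu_{w\ge n}$ (here $m-1<n$); symmetrically for $e_B$. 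Splitting these idempotents and the triangle accordingly (a standard fact in idempotent-complete triangulated categories), the summand corresponding to $e$ is a weight decomposition $A_1\to M\to B_1$ with $A_1\in\cu_{w\le m-1}$ and $B_1\in\cu_{w\ge n+1}$ (the weight classes being retraction-closed), which is condition~II.1. I would then close the loop by II.1$\Rightarrow$II.3$\Rightarrow$II.2$\Rightarrow$\ref{iwf6}: from II.1 one chooses $w_{\le m-1}M=A_1=w_{\le n}M$ via the single triangle $A_1\to M\to B_1$, making the connecting morphism of Definition~\ref{dwfil}(2) the identity (hence II.3, hence II.2); and from II.2, any cohomological $H$ of weight range $[m,n]$ has $H(w_{\le n}M)\cong H(w_{\le m-1}M)=0$, so the injection $H(M)\hookrightarrow H(w_{\le n}M)$ of Proposition~\ref{pwfil}(\ref{iwfil5}) forces $H(M)=0$, i.e.\ \ref{iwf7}, equivalently \ref{iwf6}.

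The main obstacle, and the one computation I would carry out in full, is the construction of $p,q$ together with the nilpotence $f\circ f=0$ in \ref{iwf6}$\Rightarrow$\ref{iwf6n}: everything hinges on the two factorizations of $f$ meeting in the vanishing composite $u\circ c=0$. In Part~II the analogous crux is the automatic idempotency of $e_A,e_B$, which again rests only on the orthogonality $\cu_{w\le m-1}\perp\cu_{w\ge n}$; granting that a compatible idempotent splits a distinguished triangle in an idempotent-complete category, the remaining implications are formal.
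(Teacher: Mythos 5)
Your proposal is correct in substance but follows a genuinely different route from the paper's at the two non-formal points. For the reduction of conditions I.\ref{iwf6} and I.\ref{iwf7}--\ref{iwf9} to Theorem \ref{tkw} and for I.\ref{iwf6n} $\implies$ I.\ref{iwf6} you and the paper agree. For the converse implication I.\ref{iwf6} $\implies$ I.\ref{iwf6n}, the paper does not construct $\widetilde M$ inside $\cu$ directly: it invokes Theorem 2.2.1(6,10) and Proposition 3.1.2(2) of \cite{bkwn} to produce a decomposition triangle $L'M\to M\to R'M$ in a larger triangulated category $\cu'\supset\cu$ whose terms are retracts of objects $\widetilde{LM}\in\cu_{w\le m-1}$ and $\widetilde{RM}\in\cu_{w\ge n+1}$, and then adds a split triangle on the complements to land back in $\cu$. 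Your construction --- taking $\widetilde M$ to be the cone of $\delta=h[1]\circ\partial_1$, producing $p$ and $q$ by TR3, and observing that $f=qp-\id_M$ satisfies $f\circ x=0$, $y\circ f=0$, hence $f^2$ factors through the vanishing composite $u\circ c\in\cu(w_{\le m-1}M,w_{\ge n+1}M)=\ns$, so that $qp$ is invertible --- is self-contained and avoids the external citations and the passage to $\cu'$; this is essentially the mechanism hidden inside the cited results of \cite{bkwn}, so you are re-proving rather than quoting. Similarly, for Part II the paper simply cites Theorem 2.2.1(9,10) of \cite{bkwn} (the weight-Karoubian case), whereas you split the idempotent $e=pq'$ compatibly along the extension triangle; your observation that $e_A$ and $e_B$ are automatically idempotent because $e_A^2-e_A$ factors through $\cu(A,B[-1])=\ns$ is correct (and also follows from the uniqueness clause of Proposition \ref{prtst}(\ref{iwst}), since $m-1<n$). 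The one step you should not wave through as "standard" is that a compatible idempotent endomorphism of a distinguished triangle in an idempotent complete category splits it into a direct sum of \emph{distinguished} triangles: direct summands of distinguished candidate triangles being distinguished is a genuine theorem (it is the content of the Balmer--Schlichting idempotent-completion argument, and is exactly what the weight-Karoubian machinery of \cite{bkwn} packages), so a reference is needed there. Your closing loop II.1 $\implies$ II.3 $\implies$ II.2 $\implies$ I.\ref{iwf7} matches what the paper leaves as an exercise and is fine.
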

\begin{proof}
I. It is easily seen that Theorem \ref{tkw} implies the equivalence of conditions I.\ref{iwf6} and I.\ref{iwf7}--\ref{iwf9}. 
Moreover, this theorem (see condition I.\ref{iwf4} in it) gives the implication  I.\ref{iwf6n} $\implies$  I.\ref{iwf6}.

Now assume that $M$ is without weights $m,\dots,n$. Then combining Theorem 2.2.1(6,10) with Proposition 3.1.2(2) of \cite{bkwn} we obtain the existence of a triangulated category $\cu'\supset \cu$ 
 along with a $\cu'$-distinguished triangle 
 \begin{equation}\label{ewdp}
 L'M\to M\to R'M\to L'M[1]
 \end{equation}
  such that $L'M$ is a retract of an element $\widetilde{LM}$ of $\cu_{w\le m-1}$ and $R'M$ is a retract of 
  $\widetilde{RM}\in \cu_{w\ge n+1}$. Since $\cu'$ is triangulated, there exist objects $L''M$ and $R''M$  of the category $\cu'$ such that $L'M\bigoplus L''M\cong \widetilde{LM}$ and $R'M\bigoplus R''M\cong \widetilde{RM}$. Thus we can add \ref{ewdp} with (say) a split distinguished triangle $$L''M\to M''=L''M\bigoplus R''M \to R''M\to L''M[1]$$ to obtain the triangle 
$\widetilde{LM}\to M\bigoplus M''\to  \widetilde{RM}\to \widetilde{LM}[1]$. Since $\widetilde{LM}$ and $\widetilde{RM}$ are objects of $\cu$, $M\bigoplus M''$ is $\cu'$-isomorphic to an object of $\cu$. Thus one can take $\widetilde{M}\cong M\bigoplus M''$ and obtain that  condition I.\ref{iwf6} implies condition I.\ref{iwf6n}.

II. Theorem 2.2.1(9,10) of \cite{bkwn} states that 
conditions I.\ref{iwf6} and II.1 are equivalent whenever $\cu$ is {\it weight-Karoubian}, that is, if  the category $\hw=\cu_{w\le 0}\cap \cu_{w\ge 0}$ is idempotent complete. 
Now, this assumption is easily seen to follow from the idempotent completeness of $\cu$; see Proposition 1.2.4(7) of ibid. 

Lastly, the equivalence of conditions II.1--3 is very easy, and we leave it as an exercise to the reader.

\end{proof}

\begin{rema}\label{rwild}
1. Originally, objects without weights were defined by J. Wildeshaus. His   
 \cite[Definition 1.10]{wild} coincides with condition II.1 of Theorem \ref{tww}. Thus his definition is equivalent to ours  whenever $\cu$ is {\it weight-Karoubian} (see the proof of Theorem \ref{tww}(II)). Yet this equivalence fails in general; see \S3.3 of \cite{bkwn}.

 2. In the case where there exists a $t$-structure $t$ adjacent to $w$ (on $\cu$) some more descriptions of $\mo_{\cancel{[m,n]}}\cu$ and $\cu_{w\notin{[m,n]}}$ are provided by Theorem \ref{tptp} below.
 \end{rema}

Lastly we make a simple nice observation.

\begin{lem}\label{lfact}
Assume that a $\cu$-morphism $h$  factors through 
 some $M\in \cu_{w\notin{[m,n]}}$ 
  (for $m\le n\in \z$). Then $h$ kills weights  $m,\dots, n$.
\end{lem}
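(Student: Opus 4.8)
The plan is to route the argument through the functorial criteria of the two preceding theorems rather than manipulating weight decompositions by hand. Write the hypothesized factorization as $h=g_2\circ g_1$ with $g_1\colon X\to M$ and $g_2\colon M\to Y$, where $M\in\cu_{w\notin[m,n]}$. The observation to exploit is that both relevant notions have already been translated into the language of cohomological functors of weight range $[m,n]$: by the equivalence of conditions \ref{iwf1} and \ref{iwf4} in Theorem \ref{tkw}, a morphism kills weights $m,\dots,n$ precisely when it is annihilated by every cohomological functor $H\colon\cu\to\au$ of weight range $[m,n]$; and by the equivalence of conditions I.\ref{iwf6} and I.\ref{iwf7} in Theorem \ref{tww}, the fact that $M$ is without weights $m,\dots,n$ says exactly that $H(M)=\ns$ for every such $H$.

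With these reformulations in place, I would fix an arbitrary cohomological functor $H\colon\cu\to\au$ of weight range $[m,n]$ and simply compute. Since $H$ is contravariant, $H(h)=H(g_1)\circ H(g_2)$ is a composite $H(Y)\to H(M)\to H(X)$ passing through $H(M)$. As $M$ is without weights $m,\dots,n$, Theorem \ref{tww} gives $H(M)=\ns$, whence $H(h)=0$. Since $H$ was an arbitrary functor of weight range $[m,n]$, Theorem \ref{tkw} then lets me conclude that $h$ kills weights $m,\dots,n$.

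I do not expect any genuine obstacle: the content of the lemma is entirely absorbed by the two earlier characterizations, which is why it is billed as a simple observation. The only point demanding a shred of care is bookkeeping --- remembering that $H$ is contravariant, so that the factorization of $h$ through $M$ becomes a factorization of $H(h)$ through the zero object $H(M)$. One could alternatively give a direct proof straight from Definition \ref{dkw} by choosing weight decompositions of $X$, $M$, and $Y$ and chasing the diagram \eqref{eccompl}, but that route is more cumbersome and offers no additional insight, so I would favour the functorial argument above.
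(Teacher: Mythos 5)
Your argument is correct. Both characterizations you invoke are indeed available: Theorem \ref{tkw} proves the full cycle of implications, so condition \ref{iwf4} there (vanishing of $H(g)$ for every cohomological $H$ of weight range $[m,n]$) is equivalent to killing weights $m,\dots,n$, and Theorem \ref{tww}(I) gives the corresponding characterization \ref{iwf7} of objects without these weights. With $h=g_2\circ g_1$ through $M$ and $H$ contravariant, $H(h)=H(g_1)\circ H(g_2)$ factors through $H(M)=\ns$, and you are done. This is, however, a different route from the paper's: there the factorization is written as $h=a\circ\id_M\circ b$ and the conclusion is obtained by citing Theorem 2.2.1(3) of \cite{bkwn}, i.e.\ the previously established fact that $\mo_{\cancel{[m,n]}}\cu$ is closed under composition with arbitrary morphisms (it is an ideal), applied to the factor $\id_M$, which kills weights $m,\dots,n$ by the very definition of $\cu_{w\notin[m,n]}$. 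The paper's proof is a one-line reduction to an external result; yours is self-contained within the present paper (modulo Theorems \ref{tkw} and \ref{tww}, which are proved here) and in passing re-derives the ideal property of $\mo_{\cancel{[m,n]}}\cu$ for free, since the vanishing condition \ref{iwf4} is manifestly stable under pre- and post-composition. The trade-off is that you lean on the full strength of the equivalence \ref{iwf4}$\Leftrightarrow$\ref{iwf1}, whose proof in Theorem \ref{tkw} passes through virtual $t$-truncations, whereas the cited result of \cite{bkwn} is elementary diagram chasing with weight decompositions.
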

\begin{proof}
Our assumption means that $h=a\circ \id_M\circ b$ for some $\cu$-morphisms $a$ and $b$.  Applying Theorem 2.2.1(3) of \cite{bkwn} we obtain that  $h$ kills weights  $m,\dots, n$ indeed.
\end{proof}

\begin{rema}\label{req}
In Theorem \ref{tptp}(3) below we prove the converse implication in the case where a $t$-structure adjacent to $w$ exists. The authors suspect that this equivalence fails in general. In particular, it would be interesting to take $\cu$ to be the subcategory of finite spectra in $SH$; cf. Theorem \ref{thegcomp} below and Theorem 4.2.1(1) of \cite{bwcp}.

\end{rema}

\section{On the relation to the adjacent $t$-structure}\label{skwadj}

In \S\ref{sntt} we construct some "new" torsion theories and injective classes on $\cu$ whenever it is endowed with adjacent (weight and $t$-structure) $w$ and $t$.

In \S\ref{sex} we consider the so-called purely compactly generated examples of this setting. In particular, we discuss the application of our results to the case $\cu=SH$.

\subsection{Some new torsion theories and injective classes}\label{sntt}


\begin{theo}\label{tptp}
Assume that $\cu$ is endowed with a weight structure $w$ and an adjacent $t$-structure $t$, and $m\le n\in \z$.

1. Then the couple $s=(\cu_{w\notin{[m,n]}},\cu[m,n])$ is a torsion theory.

2.  The couple $(\cu_{w\notin{[m,n]}},\jo_{[m,n]})$ is a projective class, where $\jo_{[m,n]}$ is the class of those morphisms that factor through $\cu[m,n]$ (cf. Lemma \ref{lfact}). 

3.  The couple $(\cu[m,n],\mo_{\cancel{[m,n]}}\cu)$ is an  injective class, and $\mo_{\cancel{[m,n]}}\cu$ consists of those morphisms that factor through $\cu_{w\notin{[m,n]}}$.


\end{theo}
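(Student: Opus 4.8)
The plan is to check that $(\cu[m,n],\mo_{\cancel{[m,n]}}\cu)$ fulfils the axioms of Definition \ref{dinj}, i.e.\ that it becomes a projective class in $\cu\opp$. Dualizing the three conditions there, I must verify inside $\cu$ the following: (I) $\mo_{\cancel{[m,n]}}\cu$ is exactly the class of $\cu[m,n]$-conull morphisms; (II) an object $M$ lies in $\cu[m,n]$ if and only if $\cu(g,M)=0$ for every $g\in\mo_{\cancel{[m,n]}}\cu$; and (III) every $M\in\obj\cu$ sits in a distinguished triangle $IM\xrightarrow{j}M\to PM\to IM[1]$ with $PM\in\cu[m,n]$ and $j\in\mo_{\cancel{[m,n]}}\cu$. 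The factorization description of $\mo_{\cancel{[m,n]}}\cu$ will be obtained along the way. Throughout I will lean on the torsion theory $s=(\cu_{w\notin{[m,n]}},\cu[m,n])$ provided by the first assertion of the theorem, and on its $s$-decompositions (\ref{swd}).

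Condition (I) is the technical core, and is where the adjacent $t$-structure enters. By the equivalence \ref{iwf1}$\Leftrightarrow$\ref{iwf5} of Theorem \ref{tkw}, a morphism $g$ kills weights $m,\dots,n$ precisely when $\tau_{\le n}(H_I)(g)=0$ for all $I\in\cu_{w\ge m}$. Proposition \ref{pwfil}(\ref{iwadjt}) represents $\tau_{\le n}(H_I)$ by $t_{\le n}I$, so this reads $\cu(g,t_{\le n}I)=0$ for every $I\in\cu_{w\ge m}=\cu_{t\ge m}$. Finally Proposition \ref{prtst}(\ref{it2s}) gives $t_{\le n}(\cu_{t\ge m})=\cu[m,n]$, and since each $T\in\cu[m,n]$ is its own $t_{\le n}$-truncation, the objects $t_{\le n}I$ range over all of $\cu[m,n]$. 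Hence $g\in\mo_{\cancel{[m,n]}}\cu$ if and only if $\cu(g,T)=0$ for all $T\in\cu[m,n]$, which is precisely (I).

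For (III) I simply take the $s$-decomposition $L_sM\xrightarrow{a_M}M\xrightarrow{n_M}R_sM\to L_sM[1]$ of an arbitrary $M$: here $R_sM\in\cu[m,n]$ by construction of $s$, while $a_M$ factors through $L_sM\in\cu_{w\notin{[m,n]}}$ and so kills weights $m,\dots,n$ by Lemma \ref{lfact}; thus this triangle is already the one required. In (II) the forward implication is immediate from (I). For the converse, assuming $\cu(g,M)=0$ for all $g\in\mo_{\cancel{[m,n]}}\cu$, I apply this to $g=a_M$ and evaluate the induced map $\cu(a_M,M)$ on $\id_M$, which forces $a_M=\id_M\circ a_M=0$; the triangle then splits, so $M$ is a retract of $R_sM\in\cu[m,n]$, and since $\cu_{t\le n}$ and $\cu_{t\ge m}$ are (shifts of) orthogonality complements, hence retraction-closed, we conclude $M\in\cu[m,n]$.

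It remains to identify $\mo_{\cancel{[m,n]}}\cu$ with the morphisms factoring through $\cu_{w\notin{[m,n]}}$. The inclusion $\supseteq$ is exactly Lemma \ref{lfact}. For $\subseteq$, given $g\in\cu(M,N)$ killing weights, I decompose the \emph{target} as $L_sN\xrightarrow{a_N}N\xrightarrow{n_N}R_sN\to L_sN[1]$; by (I) we have $\cu(g,R_sN)=0$, whence $n_N\circ g=0$, and the long exact sequence obtained by applying $\cu(M,-)$ yields $g=a_N\circ g'$ for some $g'\colon M\to L_sN$. Thus $g$ factors through $L_sN\in\cu_{w\notin{[m,n]}}$. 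I expect the principal difficulty to be organisational: correctly dualizing Definition \ref{dinj} (tracking which arrow of each decomposition triangle must lie in the ideal, and in which direction), and, in step (I), matching the representing objects $t_{\le n}I$ with the class $\cu[m,n]$. Once (I) is in place, the observation that the single family of $s$-decompositions from the first assertion simultaneously supplies the injective-class decomposition and drives the factorization makes the remainder short.
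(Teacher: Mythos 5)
Your treatment of the third assertion is sound and, in its key step, identical to the paper's: the identification of $\mo_{\cancel{[m,n]}}\cu$ with the class of $\cu[m,n]$-conull morphisms via Theorem \ref{tkw}(\ref{iwf1}$\Leftrightarrow$\ref{iwf5}), Proposition \ref{pwfil}(\ref{iwadjt}) and Proposition \ref{prtst}(\ref{it2s}) is precisely the argument the authors give. Where you verify the injective-class axioms and the factorization through $\cu_{w\notin{[m,n]}}$ by hand from the $s$-decomposition, the paper instead invokes a ready-made lemma (Lemma \ref{lbvt}, i.e.\ Proposition 5.2 of \cite{bvt}, applied to $\cu\opp$); your direct verification is correct and essentially reproves that lemma, which is a reasonable, more self-contained alternative.

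The genuine gap is that the statement to be proved comprises all three assertions, and you prove only the third. You explicitly assume assertion 1 ("I will lean on the torsion theory $s=(\cu_{w\notin{[m,n]}},\cu[m,n])$ provided by the first assertion"), and assertion 2 is never mentioned. Assertion 1 is where the real content of the theorem lies, and every step of your argument for assertion 3 rests on it: you need the orthogonality $\cu_{w\notin{[m,n]}}\perp\cu[m,n]$, the retraction-closedness of both classes, and above all the existence of an $s$-decomposition (\ref{swd}) for an arbitrary $M$. The paper produces this decomposition by an explicit construction that uses the adjacent $t$-structure in an essential way: one sets $RM=t_{\le n}(w_{\ge m}M)$, completes the composed morphism $M\to w_{\ge m}M\to RM$ to a triangle $LM\to M\to RM\to LM[1]$, and uses the octahedron axiom to exhibit $LM$ as an extension of $t_{\ge n+1}(w_{\ge m}M)\in\cu_{w\ge n+1}$ by $w_{\le m-1}M$, so that $LM\in\cu_{w\notin{[m,n]}}$ by condition I.\ref{iwf6n} of Theorem \ref{tww}, while $RM\in\cu[m,n]$ by Proposition \ref{prtst}(\ref{it2s}). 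None of this is supplied in your proposal, so the decompositions you repeatedly invoke are not available to you; assertion 2 (which the paper gets immediately from Lemma \ref{lbvt} applied to the torsion theory of assertion 1) is likewise left unaddressed.
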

\begin{proof}
1. According to 
 Proposition 2.4(9) of \cite{bvt} it suffices to verify that the classes $\cu_{w\notin{[m,n]}} $ and $\cu[m,n]$ are retraction-closed in $\cu$, 
$\cu_{w\notin{[m,n]}} \perp \cu[m,n]$, and for any $M\in \obj \cu$ there exists an $s$-decomposition (\ref{swd}).

The class $\cu[m,n]$ is retraction-closed in $\cu$ since it equals the intersection of the  classes $\cu_{t\le n}$ and $\cu_{t\ge m}$, whereas the 
 equalities $\lo_{s^t}^{\perp}=\ro_{s^t}$ and  $\lo_{s^t}={}^{\perp}\ro_{s^t}$ (for the torsion theory $s^t$ 
  associated with $t$) imply that these two classes are retraction-closed in $\cu$. 
 Moreover, 
 it follows from  Lemma \ref{lfact} 
   that $\cu_{w\notin{[m,n]}} $  is retraction-closed in $\cu$ as well. 

Next, the aforementioned orthogonality conditions in Definition \ref{dhop} (applied to the torsion theories associated with $w$ and $t$) along with condition I.\ref{iwf7} in Theorem \ref{tww} easily yield $\cu_{w\notin{[m,n]}} \perp \cu[m,n]$. 

It remains to verify the existence of an $s$-decomposition for an arbitrary $M\in \obj \cu$.

We fix some $w_{\ge m}M$, denote   $ t_{\le n}(w_{\ge m}M)$ by $RM$, and 
 complete the corresponding composed morphism $h\in \cu(M,RM)$ to a triangle $LM\to M\to RM\to LM[1]$. Then $LM$ is an extension of $t_{\ge n+1}(w_{\ge m}M)$ by $w_{\le m-1}M$ (by the octahedron axiom of triangulated categories). Since $t_{\ge n+1}(w_{\ge m}M)\in \cu_{w\ge n+1}$  (by the definition of adjacent structures), $LM$ is without weights $m,\dots, n$; see 
  condition I.\ref{iwf6n} in Theorem \ref{tww}. Lastly, since  $w_{\ge m}M\in \cu_{t\ge m}$, we have $RM\in \cu[m,n]$ by Proposition \ref{prtst}(\ref{it2s}). 

2. 
 The statement immediately follows from (the simple) Lemma \ref{lbvt} below.
 
 3. Applying  Lemma \ref{lbvt} to the category $\cu\opp$ we obtain that  $(\cu[m,n],\jo'[m,n])$ is 
  an injective class, where $\jo'[m,n]$ is the class of $\cu[m,n]$-conull morphisms. Moreover, $\jo'[m,n]$  coincides with the class of morphisms that factor through $\cu_{w\notin{[m,n]}}$.

It remains to verify that  $\jo'[m,n]=\mo_{\cancel{[m,n]}}\cu$. The latter statement easily follows from Propositions \ref{pwfil}(\ref{iwadjt}) and \ref{prtst}(\ref{it2s}); 
   see condition \ref{iwf5} in Theorem \ref{tkw}. 
\end{proof}



\begin{lem}\label{lbvt}
Let $(\lo,\ro)$ be a torsion theory. Then $(\lo,\jo)$ is a projective class, where $\jo$ is the class of $\lo$-null morphisms. Moreover, $\jo$ coincides with the class of those morphisms that factor through $\ro$.
\end{lem}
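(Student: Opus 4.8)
The plan is to verify the three conditions (i)--(iii) defining a projective class in Definition~\ref{dinj}(2) for the couple $(\lo,\jo)$, and then to identify $\jo$ with the class of morphisms factoring through $\ro$. Condition (i) holds tautologically, since by hypothesis $\jo$ \emph{is} the class of $\lo$-null morphisms; so the content lies in (ii), (iii) and the factorization claim. The single tool I would use throughout is the decomposition triangle $L_sM\xrightarrow{a_M}M\xrightarrow{n_M}R_sM\to L_sM[1]$ of (\ref{swd}), together with the orthogonality $\cu(X,Y)=\ns$ whenever $X\in\lo$ and $Y\in\ro$.

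For condition (iii) I would simply take the triangle (\ref{swd}) itself as the sought triangle (\ref{iwd}), so that $P_M=L_sM\in\lo$ and $j_M=n_M$. The only thing to check is $n_M\in\jo$, and this is immediate: for every $X\in\lo$ the target group $\cu(X,R_sM)$ vanishes because $R_sM\in\ro=\lo^{\perp}$, whence $\cu(X,n_M)=0$. The very same orthogonality settles one inclusion of the factorization claim: if $h$ factors as $A\to Z\to B$ with $Z\in\ro$, then for each $X\in\lo$ the map $\cu(X,h)$ factors through $\cu(X,Z)=\ns$, so $h$ is $\lo$-null.

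The remaining statements are the converses, and I would extract both from the long exact sequences obtained by applying $\cu(-,-)$ to an $s$-decomposition. For the other inclusion of the factorization claim, given an $\lo$-null $h\colon A\to B$ I would decompose the \emph{source} $A$: since $L_sA\in\lo$, $\lo$-nullity gives $h\circ a_A=0$, and exactness of $\cu(R_sA,B)\to\cu(A,B)\to\cu(L_sA,B)$ then factors $h$ through $n_A$, hence through $R_sA\in\ro$. For the nontrivial direction of (ii), I would assume $\cu(M,-)$ annihilates every element of $\jo$; applying this to $n_M\in\jo$ and evaluating at $\id_M$ forces $n_M=0$, and since the middle morphism of the distinguished triangle (\ref{swd}) then vanishes, $a_M\colon L_sM\to M$ must be a split epimorphism (read off from $\cu(M,-)$ applied to the triangle). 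The main obstacle --- in fact the only subtle point --- is the final step $M\in\lo$: the splitting exhibits $M$ as a retract of $L_sM\in\lo$, and I would close the argument by invoking that $\lo={}^{\perp}\ro$ is retraction-closed, an orthogonal complement always being closed under retracts. The converse direction of (ii), that $M\in\lo$ makes $\cu(M,-)$ kill $\jo$, is then immediate from the definition of $\lo$-nullity.
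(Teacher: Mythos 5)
Your proof is correct, and every step checks out: the tautological condition (i); condition (iii) via the $s$-decomposition triangle itself, with $n_M$ being $\lo$-null because $R_sM\in\ro=\lo^{\perp}$; the factorization claim obtained by decomposing the source of an $\lo$-null morphism $h$ and using the exact sequence $\cu(R_sA,B)\to\cu(A,B)\to\cu(L_sA,B)$; and the nontrivial half of (ii), where $\cu(M,n_M)=0$ applied to $\id_M$ splits $a_M$ and exhibits $M$ as a retract of $L_sM$, which lies in $\lo$ because ${}^{\perp}\ro$ is retraction-closed. Note, however, that the paper does not actually prove this lemma: it simply cites Proposition 5.2 of \cite{bvt}. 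So your argument is not so much an alternative route as a self-contained reconstruction of the external result being invoked; it is the natural (and presumably the cited) proof, and it has the virtue of making the note independent of \cite{bvt} on this point. The one place worth being slightly more explicit is the claim that ${}^{\perp}\ro$ is retraction-closed: if $\id_M=a_M\circ r$ and $f\colon M\to Y$ with $Y\in\ro$, then $f=f\circ a_M\circ r=0$ since $f\circ a_M\in\cu(L_sM,Y)=\ns$; this is a one-line verification, but it is the hinge of condition (ii) and deserves to be written out.
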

\begin{proof} This is Proposition 5.2 of \cite{bvt}.
\end{proof}

\begin{rema}
1. In particular,  $\mo_{\cancel{[0,0]}}\cu$ is the class of $\cu[0,0]$-conull morphisms. This statement is easily seen to generalize Corollary 4.1.6(3) of \cite{bkwn}; see Theorem \ref{thegcomp}(\ref{itnpw},\ref{itnpt}) below. Recall here that 
 in some 
   papers of the  first author the class $\cu[0,0]$ was denoted by $\cu_{t=0}$ (cf. Definition 4.3.1 of \cite{bwcp}).

2. There exists one more description of the 
 class $\jo_{[m,n]}$ 
  in Theorem \ref{tptp}. Indeed, Lemma \ref{lbvt} implies that 
   the couples $(\cu_{w\ge n+1},\jo_1)$ and  $(\cu_{w\le m-1},\jo_2)$ are projective classes;
   here $\jo_1$ is the class of morphisms that factor through (elements of) $\cu_{w\ge n+1}\perpp=\cu_{t\ge n+1}\perpp=\cu_{t\le n}$ and $\jo_2$ consists of those morphisms that factor through $\cu_{w\le m-1}\perpp=\cu_{w\ge m}=\cu_{t\ge m}$. Thus Proposition 3.3 of \cite{christ} 
    yields that  $(\cu_{w\notin{[m,n]}},\jo_{[m,n]})$ equals the {\it product} of $(\cu_{w\ge n+1},\jo_1)$ and  $(\cu_{w\le m-1},\jo_2)$; that is, $\jo_{[m,n]}=\jo_1\circ \jo_2$ (we take all possible pairwise compositions of this form), and here we use the description of $\cu_{w\notin{[m,n]}}$ provided by condition \ref{iwf6n} in Theorem \ref{tww}(I).
  \end{rema}

\subsection{Examples (including stable homotopy ones)}\label{sex}

Recall that several general statements on the existence of adjacent weight and $t$-structures are given by (Theorems 2.3.4 and 2.4.2 of)  \cite{bpws} and (Theorems  3.2.3 and 4.1.2, and Proposition 4.2.1 of) \cite{bvtr}. Here we will only discuss the most "explicit" family of examples.  

Throughout this section we assume that all coproducts are small and  $\cu$ is {\it smashing}, that is, closed with respect to (small) coproducts. Note that any $\cu$ that satisfies this condition is well-known to be idempotent complete 
 (cf. Theorem \ref{tww}(II)); see Proposition 1.6.8 of \cite{neebook}. 

\begin{defi}\label{dcomp}
Assume that $\cp$ is a full subcategory of $\cu$.
\begin{enumerate}
\item\label{idcomp}
An object $M$ of $\cu$ is said to be {\it compact} if 
 the functor $H^M=\cu(M,-):\cu\to \ab$ respects coproducts. 
 
 \item\label{idcompg}
 We will say that $\cu$ is {\it compactly generated} by 
$\cp$ if $\cp$  is small,  objects of $\cp$ are compact in $\cu$, and $\cu$ equals its own  smallest strict triangulated subcategory that is closed with respect to $\cu$-coproducts and contains $\cp$.

\item\label{idneg}
 We will say that 
 $\cp$  is {\it connective} in $\cu$ if $ \cp\perp (\cup_{i>0} \cp[i])$.\footnote{ In earlier texts of the author connective subcategories were called {\it negative} ones; another related notion is {\it silting}.}
 \end{enumerate}
\end{defi}

\begin{theo}\label{thegcomp}
I. Let $\cp$ be a connective 
 subcategory of $\cu$ that compactly generates it.  
Then the following statements are valid. 

\begin{enumerate}
\item\label{itnpw} 
 Set 
   $\cu_{w\le 0}$ (resp. $\cu_{w\ge 0}$) to be the smallest subclass of $\obj \cu$ that is closed with respect to coproducts, extensions, and contains $\obj\cp[i]$ for $i\le 0$ (resp. for $i\ge 0$). Then $w=(\cu_{w\le 0}, \cu_{w\ge 0})$ is a weight structure.

Moreover, $\cu_{w\ge 0}=(\cup_{i<0}\obj\cp[i])^{\perp}$.

\item\label{itnpt} There exists a $t$-structure adjacent to $w$; respectively, $\cu_{t\ge 0}=\cu_{w\ge 0}$ and $\cu_{t\le 0}=(\cup_{i>0}\obj\cp[i])^{\perp}$.

\item\label{itnpww} 
If $m\le n$ then the corresponding class $\cu_{w\notin{[m,n]}}$ is the smallest strict class of objects of $\cu$ that is closed with respect to coproducts, extensions, and contains 
the class $$C'=\cup_{i<m\text{\ and\ }i>n}\obj\cp[i].$$

Consequently, the torsion theory $(\cu_{w\notin{[m,n]}},\cu[m,n])$ is {\it compactly generated} by 
 $C'$ in the sense of \cite[Definition 3.1.1(2)]{bpws}, that is, $\cu[m,n]=C'{}\perpp$.



\end{enumerate}

II. Take $\cu=\shtop$ (the stable homotopy category) and $\cp=\{S^0\}$ (where $S^0$ is the sphere spectrum).

\begin{enumerate}

\item\label{itnpsh} 
Then the assumptions of part I are fulfilled.

 Moreover, in this case $w$ was denoted by $\wsp$ in Theorem 4.2.4 of \cite{bkwn}; $SH_{\wsp\ge n+1}$ is the class of $n$-connected spectra, and $SH_{\wsp\le m-1}$ consists of $m-1$-skeleta in the sense of \cite[\S6.3]{marg}.
 
 \item\label{itnpsht} The class $SH_{t\le n}$  (resp. $SH_{t\ge m}=SH_{\wsp\ge m}$) is characterized by the vanishing of the stable homotopy groups $\pi_i(-)=SH(S^0,-)$  for $i>n$ (resp. for $i<m$). Consequently, $SH[0,0]$ consists of Eilenberg-Maclane spectra.  

\item\label{itnpshww} $SH_{\wsp\notin{[m,n]}}$ consists of those spectra $X$ that possess cellular towers with  $X^{(m-1)}\cong X^{(n)}$ (these are the corresponding skeleta of $X$; see  \cite[\S6.3]{marg} once again). 
Moreover, it is characterized by the vanishing of the singular homology $\hsing_{i}(X,\z)$  for $m\le i\le n$ and the freeness of  $\hsing_{m-1}(X,\z)$ (as an abelian group).

\end{enumerate}

\end{theo}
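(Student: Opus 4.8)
The plan is to derive Part I from the general existence theory for compactly generated weight and $t$-structures together with the abstract results of \S\ref{skw}--\S\ref{sntt}, and to obtain Part II by specializing these to $SH$ and matching the resulting structures to their classical descriptions. For assertions \ref{itnpw} and \ref{itnpt} I would invoke the existence theorems recalled at the start of this subsection (Theorems 2.3.4 and 2.4.2 of \cite{bpws} and Theorem 4.1.2 and Proposition 4.2.1 of \cite{bvtr}): connectivity of $\cp$ is exactly the negativity hypothesis these require, and compactness of the objects of $\cp$ ensures that the orthogonal complements appearing there are closed under coproducts. The formulas $\cu_{w\ge 0}=(\cup_{i<0}\obj\cp[i])^{\perp}$ and $\cu_{t\le 0}=(\cup_{i>0}\obj\cp[i])^{\perp}$ are part of the output of these theorems; the only point worth checking by hand is the easy inclusion $\cu_{w\ge 0}\subseteq(\cup_{i<0}\obj\cp[i])^{\perp}$, which holds because the right-hand class contains $\obj\cp[i]$ for $i\ge 0$ (by connectivity) and is closed under coproducts and extensions.

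The new content is assertion \ref{itnpww}. Since $\cu$ is smashing it is idempotent complete, so Theorem \ref{tww}(II) identifies $\cu_{w\notin{[m,n]}}$ with the class of all extensions of objects of $\cu_{w\ge n+1}$ by objects of $\cu_{w\le m-1}$. Writing $\mathcal{E}$ for the smallest strict subclass closed under coproducts and extensions containing $\obj\cp[i]$ for $i<m$ and for $i>n$, I would prove the two inclusions separately. For $\cu_{w\notin{[m,n]}}\subseteq\mathcal{E}$ note that $\cu_{w\le m-1}$ and $\cu_{w\ge n+1}$ already lie in $\mathcal{E}$ (by \ref{itnpw} and minimality) and that $\mathcal{E}$ is extension-closed. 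For the reverse inclusion I would use that, by Theorem \ref{tptp}(1), $\cu_{w\notin{[m,n]}}={}^{\perp}(\cu[m,n])$ is automatically closed under coproducts and extensions and contains each $\obj\cp[i]$ with $i<m$ or $i>n$ (these sit in $\cu_{w\le m-1}$ resp. $\cu_{w\ge n+1}$, hence are without weights $m,\dots,n$); minimality of $\mathcal{E}$ then yields $\mathcal{E}\subseteq\cu_{w\notin{[m,n]}}$. Compact generation of the torsion theory follows by computing, via \ref{itnpw} and \ref{itnpt}, that $\cu[m,n]=\cu_{t\le n}\cap\cu_{t\ge m}=(\cup_{i<m\text{ or }i>n}\obj\cp[i])^{\perp}$, the generating set consisting of compact objects.

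For Part II I would first check the hypotheses of Part I: $S^0$ is compact and compactly generates $SH$, and it is connective because $\pi_j(S^0)=0$ for $j<0$; the identification of $w$ with $\wsp$ and of the truncation classes with connected spectra and skeleta is then Theorem 4.2.4 of \cite{bkwn} together with \cite[\S6.3]{marg}. Assertion \ref{itnpsht} is the computation $SH_{t\le n}=(\cup_{i>n}\obj\cp[i])^{\perp}=\{X:\pi_i(X)=0\text{ for }i>n\}$ (and dually for $SH_{t\ge m}$) coming from $SH(S^0[i],X)=\pi_i(X)$, whence $SH[0,0]$ consists of spectra with homotopy concentrated in degree $0$, i.e. Eilenberg--Maclane spectra. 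For \ref{itnpshww} I would use \ref{itnpww}: a spectrum with a cellular tower satisfying $X^{(m-1)}=X^{(n)}$ is an extension of the $n$-connected spectrum $X/X^{(n)}$ by the skeleton $X^{(m-1)}$, hence lies in $SH_{\wsp\notin{[m,n]}}$; conversely, splicing cellular towers of the two terms of an extension realizing $X$ produces such a tower. The singular-homology characterization then follows from the cellular chain complex, whose group of $i$-chains is free on the $i$-cells: absence of cells in dimensions $m,\dots,n$ forces $\hsing_i(X,\z)=0$ for $m\le i\le n$ and exhibits $\hsing_{m-1}(X,\z)$ as a subgroup of a free abelian group, hence free.

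The main obstacle I anticipate is the converse direction of this last characterization---producing, from the vanishing of $\hsing_i(X,\z)$ in the range $[m,n]$ and the freeness of $\hsing_{m-1}(X,\z)$, an actual cellular tower with $X^{(m-1)}=X^{(n)}$ (equivalently, a minimal cell structure avoiding these dimensions). This goes beyond the abstract extension-closure arguments above; I would approach it by constructing the skeleta inductively, using the freeness of $\hsing_{m-1}$ to split off the relevant cells and the vanishing of homology in $[m,n]$ to attach none there, cross-checking against the weight-range criterion of Theorem \ref{tww}(I) applied to the pure functors $\hsing_*(-,\Gamma)$.
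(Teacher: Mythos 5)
Your proposal is correct and follows essentially the same route as the paper: assertions I.\ref{itnpw} and I.\ref{itnpt} are outsourced to the existence theorems in the cited references, and I.\ref{itnpww} is proved by exactly the same two inclusions (Theorem \ref{tww} with idempotent completeness for one direction, and closure of $\cu_{w\notin[m,n]}={}^{\perp}(\cu[m,n])$ under coproducts and extensions via Theorem \ref{tptp}(1) for the other), with Part II obtained by specialization. The only divergence is that for the singular-homology characterization in II.\ref{itnpshww} the paper simply cites Theorem 4.2.4(5) of \cite{bkwn}, whereas you sketch the minimal-cell-structure argument directly; your sketch of the converse direction is the standard one from \cite[\S6.3]{marg} and is sound, though left incomplete.
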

\begin{proof}


I. Assertions I.\ref{itnpw} and I.\ref{itnpt} are contained in Theorem 4.5.2 of \cite{bws}; see Theorem 3.2.3(3) and  Proposition 4.3.3(1) of \cite{bwcp} for more detail. 

I.\ref{itnpww}. Theorem \ref{tww} (see  condition I.\ref{iwf6n}  in it) implies that the class $C$ specified in the assertion contains $\cu_{w\notin{[m,n]}}$.

Conversely, $C\subset \cu_{w\notin{[m,n]}}$  since the latter class clearly contains $\obj\cp[i]$ both for all  $i<m$ and for $i>n$, and Theorem \ref{tptp}(1) immediately implies that it is closed with respect to coproducts and extensions. 

The second part of the assertion follows from our definitions easily.

II.\ref{itnpsh}. The first part of the assertion is a particular case of Theorem 4.1.1(1) of \cite{bwcp} (cf. Remark \ref{rsh}(2) below) that is well-known.
Next, the description of  $\wsp$ in the assertion is contained in Proposition 4.2.1(3) and Proposition 4.2.5 of \cite{bkwn}.

Assertion II.\ref{itnpsht} immediately follows from assertion I.\ref{itnpt}.

II.\ref{itnpshww}.  The first description of $SH_{\wsp\notin{[m,n]}}$  in the assertion 
 is an easy combination of  Theorem \ref{tww}(II) with loc. cit. The second one is provided by Theorem 4.2.4(5) of ibid.

\end{proof}

\begin{rema}\label{rsh}
1. Combining Theorem \ref{thegcomp}(\ref{itnpsh}) 
with Theorem \ref{tptp}(1) we obtain rather curious "decompositions" of objects of $SH$. Moreover, these decompositions of spectra are weakly functorial in the sense of Remark \ref{rwd}(2).

The latter theorem also gives 
 some more descriptions of morphisms killing weights and spectra without weights in a range. 
Combining some of them with Theorem \ref{thegcomp}(II.\ref{itnpsht}) we obtain the following remarkable statement: for  an $SH$-morphism $g$ we have $\hsingc^0(g,\Gamma)=0$ for any abelian group $\Gamma$ (that is, $g$ acts trivially on singular cohomology with coefficients in $\Gamma$) if and only if $g$ factors through a spectrum $X$ such that all $\hsingc^0(X,\Gamma)$ vanish. Moreover, one can also describe $SH_{\wsp\notin{[0,0]}}$ by means of  Theorem \ref{thegcomp}(II.\ref{itnpshww}) here.

2. Theorem \ref{thegcomp}(\ref{itnpsh})  can be extended to the equivariant stable homotopy $SH(G)$, where $G$ is a compact Lie group; see \S4.2 of \cite{bkwn} and \S4.1 of \cite{bwcp}. 
 Recall that the corresponding analogue of the functor $\hsingc^0(-,\Gamma)$ (see the previous part of this assertion) is the Bredon cohomology represented by an Eilenberg-MacLane $G$-spectrum; see loc. cit.

 One of the main distinctions of this general case from the case where $G=\{e\}$ (and $SH(G)=SH$) is that some of the definitions and results of \cite{marg} were never 
extended to the equivariant context.

3. The compactness assumption in 
 Theorem \ref{thegcomp}(I) can be weakened; see 
  Corollaries 2.3.1(1) and 4.1.4(II.1) of \cite{bsnew} and Remark 3.2.4(3) of \cite{bwcp}. 
\end{rema}


\end{document}